\newtheorem{theorem}{Theorem}
\newtheorem{corollary}[theorem]{Corollary}
\newcommand{\lrf}[1]{\lfloor #1\rfloor}
\title{On some binomial coefficients related to the evaluation of $\tan(nx)$}
\author
{Shi-Mei Ma \footnote{ {\it Email address:}
shimeima@yahoo.com.cn (S.-M. Ma)} }
\date{\footnotesize School of Mathematics and Statistics,
        Northeastern University at Qinhuangdao,\\ Hebei 066004,
        China}
\begin{document}

\maketitle

\begin{abstract}
The purpose of this paper is to study some binomial coefficients which are related to the evaluation of $\tan(nx)$. We present a connection between these binomial coefficients and the coefficients of
a family of derivative polynomials for tangent and secant.
\bigskip\\
{\sl Keywords:}\quad Tangent function; Secant function; Binomial coefficients; Differential operator
\end{abstract}
\section{Introduction}
Denote by $D$ the differential operator ${d}/{d x}$. Throughout this paper, set $y=\tan(x)$ and $z=\sec(x)$.
Then $D(y)=z^2$ and $D(z)=yz$. An important tangent identity is given by
$$1+y^2=z^2.$$
In 1995, Hoffman~\cite{Hoffman95} considered two sequences of {\it derivative polynomials} defined respectively by
\begin{equation*}\label{derivapoly-1}
D^n(y)=P_n(y)\quad {\text and}\quad D^n(z)=z Q_n(y)
\end{equation*}
for $n\geq 0$.
From the chain rule it follows that the polynomials $P_n(y)$ satisfy $P_0(y)=y$ and $P_{n+1}(y)=(1+y^2)P_n'(y)$, and similarly $Q_0(y)=1$ and $Q_{n+1}(y)=(1+y^2)Q_n'(y)+yQ_n(y)$.
Various refinements of the derivative polynomials have been pursued by several authors
(see~\cite{Franssens07,Ma12,Ma122} for instance).

In 1972,
Beeler {\it et al.} found the following elegant identity~\cite[Item~16]{Beeler}:
\begin{equation}\label{Beeler}
\tan (n\arctan (t))=\frac{1}{i}\frac{(1+it)^n-(1-it)^n}{(1+it)^n+(1-it)^n}  \quad {\text for}\quad n\geq 0,
\end{equation}
where $i=\sqrt{-1}$.
Let
\begin{equation*}\label{RnkTnk}
R(n,k)=\binom{n}{2k+1}\quad {\text and}\quad T(n,k)=\binom{n}{2k}.
\end{equation*}
We can now present the following equivalent version of~\eqref{Beeler}:
\begin{equation*}\label{Beeler}
\tan (n x)=\frac{\sum_{k=0}^{\lrf{\frac{n-1}{2}}}(-1)^kR(n,k)\tan^{2k+1}(x)}{\sum_{k=0}^{\lrf{\frac{n}{2}}}(-1)^kT(n,k)\tan^{2k}(x)},
\end{equation*}
where $x=\arctan (t)$ (see~\cite[A034839,~A034867]{Sloane} for details).
In the sequel to the work of Beeler {\it et al.}, many other methods have been given to compute $\tan (n x)$.
Several of them have in common the use of angle addition formula. For example, Szmulowicz~\cite{Szmulowicz05} obtained a generalized tangent angle addition formula.
The purpose of this paper is to explore some further applications of the numbers $R(n,k)$ and $T(n,k)$.

Using the following recurrence relations~\cite[p.~10]{Comtet74}:
\begin{equation*}\label{binom}
\binom{n}{k}=\binom{n-1}{k-1}+\binom{n-1}{k}\quad {\text and}\quad \binom{n}{k}=\frac{n-k+1}{k}\binom{n}{k-1},
\end{equation*}
it can be easily verified that
\begin{equation}\label{Rnk-recu}
nR(n+1,k)=(n+2k+1)R(n,k)+(n-2k+1)R(n,k-1)
\end{equation}
and
\begin{equation}\label{Tnk-recu}
nT(n+1,k)=(n+2k)T(n,k)+(n-2k+2)T(n,k-1).
\end{equation}

For $n\geq 0$, we always assume that
$$(Dz)^{n+1}(z)=(Dz)(Dz)^n(z)=D(z(Dz)^n(z))$$
and
$$(Dz)^{n+1}(y)=(Dz)(Dz)^n(y)=D(z(Dz)^n(y)).$$
In this paper we consider the expansions of $(Dz)^n(z)$ and $(Dz)^n(y)$, where the numbers $R(n,k)$ and $T(n,k)$ appear in a natural way.
\section{Polynomials related to $(Dz)^n(z)$ and $(Dz)^n(y)$}
For $n\geq 0$, we define
\begin{equation}\label{def-derivative-1}
(Dz)^n(z)=\sum_{k=0}^{\lrf{\frac{n}{2}}}M(n,k)y^{n-2k}z^{n+2k+1}
\end{equation}
and
\begin{equation}\label{def-derivative-2}
(Dz)^n(y)=\sum_{k=0}^{\lrf{\frac{n+1}{2}}}N(n,k)y^{n-2k+1}z^{n+2k}.
\end{equation}

For example, when $n=1$, we have $$(Dz)(z)=D(z^2)=2yz^2\quad {\text and}\quad (Dz)(y)=D(zy)=y^2z+z^3.$$
\begin{theorem}\label{thm-1}
For $0\leq k\leq \left\lfloor{\frac{n}{2}}\right\rfloor$, the numbers $M(n,k)$ satisfy the recurrence relation
\begin{equation}\label{recurrence-1}
M(n+1,k)=(n+2k+2)M(n,k)+(n-2k+2)M(n,k-1)
\end{equation}
with the initial conditions $M(0,0)=1$ and $M(0,k)=0$ for $k\geq 1$,
and the numbers $N(n,k)$ satisfy the recurrence relation
\begin{equation}\label{recurrence-11}
N(n+1,k)=(n+2k+1)N(n,k)+(n-2k+3)N(n,k-1)
\end{equation}
with the initial conditions $N(0,0)=1$ and $N(0,k)=0$ for $k\geq 1$.
\end{theorem}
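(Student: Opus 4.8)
The plan is to prove both statements by a single induction on $n$, driven by the observation that the operator $O := Dz$, given by $O(f)=D(zf)$, acts in a transparent degree-shifting way on monomials $y^a z^b$. The computational engine of the whole argument is the one identity
\[
O(y^a z^b)=D(y^a z^{b+1})=a\,y^{a-1}z^{b+3}+(b+1)\,y^{a+1}z^{b+1},
\]
which follows from the product and chain rules together with $D(y)=z^2$ and $D(z)=yz$. Thus $O$ sends one monomial to a combination of two: one term lowers the $y$-exponent by $1$ and raises the $z$-exponent by $3$, the other raises the $y$-exponent by $1$ and the $z$-exponent by $1$.

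First I would dispose of the base case: since $(Dz)^0(z)=z$ and $(Dz)^0(y)=y$, comparison with \eqref{def-derivative-1} and \eqref{def-derivative-2} forces $M(0,0)=N(0,0)=1$ and all other $M(0,k)=N(0,k)=0$, matching the stated initial conditions. For the inductive step on $M$ I would apply $O$ termwise to \eqref{def-derivative-1}. With $a=n-2k$ and $b=n+2k+1$, the identity above gives
\[
O\!\left(y^{n-2k}z^{n+2k+1}\right)=(n-2k)\,y^{n-2k-1}z^{n+2k+4}+(n+2k+2)\,y^{n-2k+1}z^{n+2k+2}.
\]
Both resulting monomials are again of the canonical shape $y^{(n+1)-2j}z^{(n+1)+2j+1}$ occurring in \eqref{def-derivative-1} at level $n+1$: the first term corresponds to $j=k+1$ and the second to $j=k$. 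Collecting the coefficient of $y^{(n+1)-2j}z^{(n+1)+2j+1}$ — a contribution $n-2(j-1)=n-2j+2$ from the $k=j-1$ summand and a contribution $n+2j+2$ from the $k=j$ summand — yields exactly \eqref{recurrence-1}. The argument for $N$ is identical in structure: taking $a=n-2k+1$ and $b=n+2k$ in the displayed identity and re-indexing the two sums against the canonical monomials $y^{(n+1)-2j+1}z^{(n+1)+2j}$ of \eqref{def-derivative-2} produces \eqref{recurrence-11}.

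The step requiring the most care is the bookkeeping at the two ends of the summation range, i.e.\ checking that the re-indexed sum really carries the range $0\le j\le\lrf{(n+1)/2}$ demanded by \eqref{def-derivative-1}. When $n$ is even the summand $k=\lrf{n/2}$ of the first part would produce an index $j=\lrf{n/2}+1$ outside the new range, but its coefficient $n-2k$ vanishes there, so no spurious term survives; when $n$ is odd the same summand legitimately supplies the new top index. Adopting the convention $M(n,k)=N(n,k)=0$ for $k$ outside $[0,\lrf{n/2}]$ (respectively $[0,\lrf{(n+1)/2}]$) then makes the recurrences hold uniformly, including at $k=0$, where the $M(n,k-1)$ term drops out.

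Finally, to be sure the numbers $M(n,k)$ and $N(n,k)$ are well defined to begin with, I would remark that the monomials $y^{n-2k}z^{n+2k+1}$ are linearly independent as functions of $x$: reducing each via $z^2=1+y^2$ to the form $z^{\epsilon}\cdot(\text{polynomial in }y)$ with $\epsilon\in\{0,1\}$ fixed by the parity of $n$, their lowest-degree terms $y^{n-2k}$ are pairwise distinct, so the expansion coefficients are unique; the same applies to \eqref{def-derivative-2}. I do not expect any genuine obstacle here — the entire proof is a routine induction — so the only real vigilance needed is the reindexing and the boundary checks described above.
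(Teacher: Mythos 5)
Your proposal is correct and follows essentially the same route as the paper: apply the operator $Dz$ termwise to the expansion \eqref{def-derivative-1} using $D(y^{a}z^{b+1})=a\,y^{a-1}z^{b+3}+(b+1)\,y^{a+1}z^{b+1}$, then reindex and compare coefficients to read off \eqref{recurrence-1}, with the $N$ case handled identically. The paper's proof is just a terser version of this computation; your extra attention to the boundary indices and to the linear independence of the monomials (hence well-definedness of $M(n,k)$, $N(n,k)$) fills in details the paper leaves implicit.
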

\begin{proof}
Note that
\begin{align*}
(Dz)^{n+1}(z)&=(Dz)(Dz)^n(z)\\
             &=\sum_{k\geq 0}(n+2k+2)M(n,k)y^{n-2k+1}z^{n+2k+2}+
\sum_{k\geq 0}(n-2k)M(n,k)y^{n-2k-1}z^{n+2k+4}.
\end{align*}
Thus we obtain~\eqref{recurrence-1}.
Similarly, we get~\eqref{recurrence-11}.
\end{proof}

Combining~\eqref{Rnk-recu}, \eqref{Tnk-recu}, \eqref{recurrence-1} and~\eqref{recurrence-11}, we get the following result.
\begin{corollary}
For $0\leq k\leq \left\lfloor{\frac{n}{2}}\right\rfloor$, we have
$M(n,k)=n!R(n+1,k)$
and $N(n,k)=n!T(n+1,k)$.
\end{corollary}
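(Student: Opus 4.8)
The plan is to prove both identities simultaneously by induction on $n$, the key point being that the recurrence \eqref{recurrence-1} for $M(n,k)$ is, coefficient for coefficient, identical to the recurrence \eqref{Rnk-recu} for $R(n,k)$ after the index shift $n\mapsto n+1$, and likewise \eqref{recurrence-11} matches \eqref{Tnk-recu} under the same shift. First I would dispose of the base case $n=0$: the initial conditions give $M(0,0)=N(0,0)=1$ and $M(0,k)=N(0,k)=0$ for $k\geq 1$, while $0!\,R(1,0)=\binom{1}{1}=1$, $0!\,T(1,0)=\binom{1}{0}=1$, and $R(1,k)=T(1,k)=0$ for $k\geq 1$, so both claimed formulas hold at $n=0$.

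For the inductive step on $M$, I would assume $M(n,k)=n!\,R(n+1,k)$ for all $k$ and substitute into \eqref{recurrence-1}, obtaining
\begin{equation*}
M(n+1,k)=n!\bigl[(n+2k+2)R(n+1,k)+(n-2k+2)R(n+1,k-1)\bigr].
\end{equation*}
The bracketed expression is precisely the right-hand side of \eqref{Rnk-recu} with $n$ replaced by $n+1$, namely $(n+1)R(n+2,k)$; hence $M(n+1,k)=n!\,(n+1)R(n+2,k)=(n+1)!\,R(n+2,k)$, which advances the induction. The argument for $N$ is identical: substituting $N(n,k)=n!\,T(n+1,k)$ into \eqref{recurrence-11} produces the bracket $(n+2k+1)T(n+1,k)+(n-2k+3)T(n+1,k-1)$, which is the right-hand side of \eqref{Tnk-recu} after $n\mapsto n+1$, i.e.\ $(n+1)T(n+2,k)$, and multiplying by $n!$ gives $(n+1)!\,T(n+2,k)$.

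I expect no serious obstacle here; the entire content lies in the observation that the four recurrences are paired so that the only change between $M,N$ and their binomial counterparts is the accumulating factorial prefactor, which is promoted from $n!$ to $(n+1)!$ exactly by the factor $n+1$ sitting on the left-hand side of \eqref{Rnk-recu} and \eqref{Tnk-recu} under the shift. The one point requiring a little care is the treatment of the boundary terms — the appearance of $R(n+1,-1)$ and $T(n+1,-1)$ at $k=0$, and of $M(n,k)$, $N(n,k)$ just beyond the top of their stated range — but all of these vanish under the standard conventions $\binom{m}{j}=0$ for $j<0$ or $j>m$ together with $M(n,k)=N(n,k)=0$ outside their support, so the comparison of coefficients stays valid across the full range $0\leq k\leq \lfloor n/2\rfloor$.
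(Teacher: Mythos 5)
Your proof is correct and is exactly the argument the paper intends: the paper's one-line justification ("combining \eqref{Rnk-recu}, \eqref{Tnk-recu}, \eqref{recurrence-1} and \eqref{recurrence-11}") is precisely the observation that the $M$ and $N$ recurrences coincide with the shifted $R$ and $T$ recurrences up to the factorial prefactor, which you have simply written out as a formal induction with the base case and boundary conventions made explicit.
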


Recall that $z^2=1+y^2$, we define
\begin{equation}\label{Rny-def}
(Dz)^n(z)=\begin{cases}
(2m)!zR_{2m+1}(y)& \text{if $n=2m$,}\\
 (2m+1)!R_{2m+2}(y)& \text{if $n=2m+1$;}
\end{cases}
\end{equation}
and
\begin{equation*}
(Dz)^n(y)=\begin{cases}
(2m)!T_{2m+1}(y)& \text{if $n=2m$,}\\
 (2m+1)!zT_{2m+2}(y)& \text{if $n=2m+1$,}
\end{cases}
\end{equation*}
where $m\geq 0$.
We now present explicit formulas for the polynomials $R_n(y)$ and $T_n(y)$.
\begin{theorem}
For $n\geq 1$, we have
\begin{equation}\label{Rny}
R_n(y)=\sum_{k=0}^{\lrf{\frac{n-1}{2}}}R(n,k)y^{n-2k-1}(1+y^2)^{\lrf{\frac{n}{2}}+k}
\end{equation}
and
\begin{equation}\label{Tny}
T_n(y)=\sum_{k=0}^{\lrf{\frac{n}{2}}}T(n,k)y^{n-2k}(1+y^2)^{\lrf{\frac{n-1}{2}}+k}.
\end{equation}
\end{theorem}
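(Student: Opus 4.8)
The plan is to read both identities directly off the explicit expansions~\eqref{def-derivative-1} and~\eqref{def-derivative-2}, using the preceding Corollary to eliminate the coefficients $M(n,k)$ and $N(n,k)$ in favour of the binomial numbers $R(n,k)$ and $T(n,k)$, and then invoking the fundamental identity $z^2=1+y^2$ to turn the surviving even powers of $z$ into powers of $(1+y^2)$. Since the definitions in~\eqref{Rny-def} split $R_n(y)$ and $T_n(y)$ according to the parity of the exponent $n$ in $(Dz)^n$, I would organize the argument into two parity cases for each family.

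For $R_n(y)$, first I would treat the even case $n=2m$. Substituting~\eqref{def-derivative-1} and then $M(2m,k)=(2m)!\,R(2m+1,k)$ into the defining relation $(Dz)^{2m}(z)=(2m)!\,zR_{2m+1}(y)$, I would cancel the common factor $(2m)!$ and divide off the single loose factor $z$; this leaves $R_{2m+1}(y)=\sum_{k=0}^{m}R(2m+1,k)\,y^{2m-2k}z^{2m+2k}$, and replacing $z^{2m+2k}=(1+y^2)^{m+k}$ yields~\eqref{Rny} for the odd subscript $2m+1$, where $\lrf{(2m+1)/2}=m$. The odd case $n=2m+1$ is identical except that $(Dz)^{2m+1}(z)=(2m+1)!\,R_{2m+2}(y)$ carries no loose factor of $z$; after cancelling $(2m+1)!$, using $M(2m+1,k)=(2m+1)!\,R(2m+2,k)$, and applying $z^{2m+2k+2}=(1+y^2)^{m+k+1}$, one recovers~\eqref{Rny} for the even subscript $2m+2$.

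The argument for $T_n(y)$ runs in parallel, with the single factor of $z$ now appearing in the odd case rather than the even one. In the even case $n=2m$ the relation $(Dz)^{2m}(y)=(2m)!\,T_{2m+1}(y)$ has no loose $z$, so substituting $N(2m,k)=(2m)!\,T(2m+1,k)$ and then $z^{2m+2k}=(1+y^2)^{m+k}$ gives~\eqref{Tny} for subscript $2m+1$. In the odd case $n=2m+1$ the relation $(Dz)^{2m+1}(y)=(2m+1)!\,zT_{2m+2}(y)$ does contribute a loose $z$; dividing by it converts the odd power $z^{2m+1+2k}$ into the even power $z^{2m+2k}=(1+y^2)^{m+k}$, and substituting $N(2m+1,k)=(2m+1)!\,T(2m+2,k)$ delivers~\eqref{Tny} for subscript $2m+2$.

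I expect no genuine obstacle here: the entire content is the cancellation of the factorials, the removal of the single stray factor of $z$ in the appropriate parity case, and the substitution $z^{2j}=(1+y^2)^j$. The only point demanding care is the bookkeeping of the floor functions and summation ranges, namely verifying that $\lrf{n/2}$, $\lrf{(n-1)/2}$ and the shifted exponents of $(1+y^2)$ line up correctly in each parity; this amounts to a routine check in each of the four subcases.
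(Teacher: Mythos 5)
Your proposal is correct and follows essentially the same route as the paper: substitute the expansions \eqref{def-derivative-1} and \eqref{def-derivative-2} into \eqref{Rny-def} and its companion, replace $M(n,k)$, $N(n,k)$ by $n!R(n+1,k)$, $n!T(n+1,k)$ via the Corollary, cancel the factorial and the stray factor of $z$, and convert $z^{2j}$ to $(1+y^2)^j$. The only difference is one of thoroughness: the paper writes out just the case $n=2m$ for $R_{2m+1}(y)$ and declares the remaining three parity cases ``similar,'' whereas you carry out all four explicitly, with the floor-function bookkeeping checking out in each.
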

\begin{proof}
We only prove the explicit formula for $R_{2m+1}(y)$ and the others can be proved in a similar way.
Combining~\eqref{def-derivative-1} and~\eqref{Rny-def}, we obtain
$$(Dz)^{2m}(z)=(2m)!\sum_{k\geq 0}R(2m+1,k)y^{2m-2k}z^{2m+2k+1}.$$
Therefore, we get
\begin{align*}
R_{2m+1}(y)&=\sum_{k\geq 0}R(2m+1,k)y^{2m-2k}z^{2m+2k}\\
           &=\sum_{k\geq 0}R(2m+1,k)y^{2m-2k}(1+y^2)^{m+k},
\end{align*}
and the statement immediately follows.
\end{proof}

Using~\eqref{Rny} and~\eqref{Tny}, the first few terms of $R_n(y)$ and $T_n(y)$ can be calculated directly as follows:
$$R_1(y)=1,R_2(y)=2y+2y^3,R_3(y)=1+5y^2+4y^4,R_4(y)=4y+16y^3+20y^5+8y^7;$$
$$T_1(y)=y,T_2(y)=1+2y^2,T_3(y)=3y+7y^3+4y^5,T_4(y)=1+9y^2+16y^4+8y^6.$$

For $n\geq 1$, we define
\begin{equation*}
\widetilde{R}_n(y)=\begin{cases}
T_{2m}(y)& \text{if $n=2m$,}\\
R_{2m+1}(y)& \text{if $n=2m+1$;}
\end{cases}
\end{equation*}
and
\begin{equation*}
\widetilde{T}_n(y)=\begin{cases}
R_{2m}(y)& \text{if $n=2m$,}\\
T_{2m+1}(y)& \text{if $n=2m+1$.}
\end{cases}
\end{equation*}

Let $\widetilde{R}_n(y)=\sum_{k=1}^{n}\widetilde{R}(n,k)y^{2k-2}$ and
$\widetilde{T}_n(y)=\sum_{k=1}^{n}\widetilde{T}(n,k)y^{2k-1}$.
For $1\leq n\leq 5$, the coefficients of $\widetilde{R}_n(y)$
can be arranged as follows with $\widetilde{R}(n,k)$ in row $n$ and column $k$:
$$\begin{array}{ccccccc}
  1 &  &  &  & & &\\
  1 & 2 &  &  & & &\\
  1 & 5 & 4 &  & & &\\
  1 & 9 & 16 & 8 & &  &\\
  1 & 14 & 41 & 44 & 16 & &\\
\end{array}$$
For $1\leq n\leq 5$, the coefficients of $\widetilde{T}_n(y)$
can be arranged as follows with $\widetilde{T}(n,k)$ in row $n$ and column $k$:
$$\begin{array}{ccccccc}
  1 &  &  &  & & &\\
  2 & 2 &  &  & & &\\
  3 & 7 & 4 &  & & &\\
  4 & 16 & 20 & 8 & &  &\\
  5 & 30 & 61 & 52 & 16 & &\\
\end{array}$$
It should be noted that the number $\widetilde{R}(n,k)$ is the number of {\it $k$-part order-consecutive partition} of the set $\{1,2,...,n\}$ (see~\cite[\textsf{A056242}]{Sloane}).
The numbers $\widetilde{T}(n,k)$ appear as~\textsf{A210753} in~\cite{Sloane}.


\end{document}